\theoremstyle{plain}
\newtheorem{theorem}[equation]{Theorem}
 \newtheorem{corollary}[equation]{Corollary}
 \newtheorem{lemma}[equation]{Lemma}
 \newtheorem{proposition}[equation]{Proposition}
 \newtheorem{definition}[equation]{Definition}
\theoremstyle{definition}
 \newtheorem{remark}{\emph{Remark}}
\newcommand{\fv}{\mathsf{free}}
\newcommand{\finset}[1]{\{ #1 \}} 
\newcommand{\thehole}[1]{  \finset{  #1  }}
\newcommand{\sq}[2]{#1\vdash #2}
\renewcommand{\implies}{\Rightarrow}
\newcommand{\GS}{\textbf{GS}\xspace}
\newcommand{\We}{\textsc{W}}
\newcommand{\C}{\textsc{C}}
\newcommand{\w}{\wedge}
\newcommand{\G}{\mathrm{\Gamma}}
\newcommand{\Ax}{\textsc{Ax}}
\newcommand{\wR}{\w\mathrm R}
\newcommand{\vR}{\vee\mathrm R}
\newcommand{\WR}{\We\mathrm R}
\newcommand{\CR}{\C\mathrm R}
\newcommand{\existsR}{\exists\mathrm R}
\newcommand{\forallR}{\forall\mathrm R}
\newcommand{\turnstile}{\vdash}
\title{A sequent calculus demonstration of Herbrand's Theorem}
\author{Richard McKinley\thanks{Work supported by the Swiss National Science Foundation grant ``Algebraic and Logical Aspects of Knowledge Processing.''}}
\begin{document}
\maketitle
\begin{abstract}
  Herbrand's theorem is often presented as a corollary of Gentzen's
  sharpened Hauptsatz for the classical sequent calculus.  However, the
  midsequent gives Herbrand's theorem directly only for formulae in
  prenex normal form.  In the Handbook of Proof Theory, Buss claims to
  give a proof of the full statement of the theorem, using sequent
  calculus methods to show completeness of a calculus of
  \emph{Herbrand proofs}, but as we demonstrate there is a flaw in the
  proof.

  In this note we give a correct demonstration of Herbrand's theorem
  in its full generality, as a corollary of the full cut-elimination
  theorem for LK.  The major difficulty is to show that, if there is
  an Herbrand proof of the premiss of a contraction rule, there is an
  Herbrand proof of its conclusion.  We solve this problem by showing
  the admissibility of a \emph{deep contraction rule}.
\end{abstract}
\section{Introduction}

Herbrand's fundamental theorem \cite{Herbrand30} gives that
provability in the predicate calculus may be reduced to propositional
provability: specifically, given any formula $A$ in the language of
first-order logic, we can compute, given a proof of $A$, a valid
quantifier-free formula built from substitution instances of
subformulae of $A$. Herbrand's theorem most easily stated for
$\exists$-formulae or $\forall\exists$-formulae, and this form of the
theorem is sufficient for applications. The most general form of the
theorem that most students of logic will see is for a disjunction of
prenex formulae, as this follows as an almost immediate consequence of
Gentzen's midsequent theorem (or sharpened Hauptsatz)
\cite{Gentzen34}. The following is the midesequent theorem for $\GS$,
a one-sided sequent system with multiplicative context handling, as
shown in Table~\ref{GS1}:

\begin{theorem}
  Suppose that $\Gamma$ is a sequence of prenex formulae provable in
  the system \GS. Then there is some quantifer-free sequent $\Gamma'$
  and a proof in \GS of the form

\[
\deduce[\phantom{N} \  \Arrowvert \ M]{\sq{}{\G}}
{\deduce[\phantom{M} \  \Arrowvert \ N]{\sq{}{\G'}}{}}
\]
where the derivation  \ $\mathrm{M}$ necessarily
contains only propositional rules, and where $N$
contains only introductions of quantifiers and structural rules.  The
sequent $\G'$ is then called the \emph{midsequent}.
\end{theorem}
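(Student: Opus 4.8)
The plan is to derive the statement as a permutation result on \emph{cut-free} proofs. First I would appeal to cut-elimination for \GS to replace the given proof of $\sq{}{\G}$ by a cut-free one. By the subformula property, every formula occurring in this proof is a subformula of one of the prenex formulae in $\G$, and every such subformula is either again prenex (a quantifier prefix applied to the matrix) or else quantifier-free (a subformula of a matrix). The crucial consequence is that a propositional inference---an instance of $\wR$ or $\vR$---can only have a quantifier-free principal formula, and hence can act on a given formula occurrence only \emph{after} all of its quantifiers have been stripped by quantifier inferences higher up the branch. Thus propositional inferences are already, locally, forced to lie on the axiom side of the quantifier inferences that create their principal formulae; the work is to make this separation global and to exhibit a single quantifier-free sequent $\G'$ through which the whole proof factors.

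The engine is a local permutation lemma. I would call an adjacent pair of inferences a \emph{redex} when a quantifier or structural inference ($\forallR$, $\existsR$, $\CR$, $\WR$, or $\ER$) sits immediately \emph{above} a propositional inference ($\wR$ or $\vR$), and in that case permute the two so that the quantifier/structural inference is pushed toward the endsequent $\sq{}{\G}$ and the propositional inference toward the axioms. I would check the cases exhaustively. Since $\wR$ is the only two-premise rule and it is propositional, every permutation is a mere reordering rather than a duplication: permuting a one-premise quantifier or structural inference past the branching $\wR$ only requires reassembling the split multiplicative context, using $\ER$ to rearrange, and (for a $\CR$) carrying the two contracted copies into the correct premise and re-contracting afterwards. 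Because no step duplicates inferences, termination is immediate from the strictly decreasing count of redexes. When none remain, the region of the proof below the topmost remaining quantifier and structural inferences is a single branch built only from those rules, while everything above it is a tree of propositional inferences acting on quantifier-free formulae; the unique sequent separating the two is therefore quantifier-free and is the midsequent $\G'$, with the propositional part supplying $M$ and the quantifier-and-structural part supplying $N$.

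The main delicacy I anticipate is not termination but two bookkeeping points inside the permutation cases. First, when a $\forallR$ is permuted below $\wR$ its eigenvariable, originally fresh only for the left context, must remain fresh for the enlarged context inherited from the other premise; I would secure this by renaming the eigenvariable to one genuinely fresh for the combined conclusion before permuting, so that the rewriting stays within \GS. Second, a weakening $\WR$ whose formula is an ancestor of the principal formula of a later $\wR$ cannot simply be reordered below it; here I would instead \emph{absorb} the weakening, replacing the inference by a direct weakening of the composite formula and pruning the now-redundant sibling subderivation. Both moves preserve the endsequent $\sq{}{\G}$ and cut-freeness and do not create new redexes, so the normalization runs to completion and yields exactly the factorization through the quantifier-free midsequent $\G'$ claimed in the statement.
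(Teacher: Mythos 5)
First, a point of comparison: the paper never proves this theorem --- it is quoted as Gentzen's sharpened Hauptsatz, with a citation to Gentzen, so your proposal must be measured against the standard permutation argument, which it follows in outline. Your central observation is exactly right and is the heart of why the theorem holds: cut-freeness plus the subformula property for a prenex endsequent force every $\vR$ and $\wR$ to have a quantifier-free principal formula, so a quantifier inference sitting immediately above a propositional inference always acts on a \emph{side} formula of the latter, and that swap is a pure reordering (with eigenvariable renaming when a $\forallR$ crosses a $\wR$, as you note; the paper's strict-variable convention handles this too).

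The genuine gap is your treatment of the structural rules. You claim every permutation step is a mere reordering, so that termination follows from a strictly decreasing redex count; this fails in exactly the case your analysis omits, namely a contraction whose contracted formula is an \emph{ancestor of the principal formula} of a propositional inference below it. Concretely: derive $\turnstile \G, A, A$, contract to $\turnstile \G, A$, then apply $\wR$ against $\turnstile \G', B$ to conclude $\turnstile \G, \G', A\land B$. To place that contraction below the $\wR$ you need $\turnstile \G, \G', \G', A\land B, A\land B$, which requires \emph{duplicating} the entire subproof of $\turnstile \G', B$ and then contracting every formula of $\G'$ as well --- duplication, new contractions, and no decrease in your redex count. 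Your weakening ``absorption'' has the same hole: it works only when an entire premise of a $\wR$ is weakened; if just one disjunct of a $\vR$'s principal formula comes from weakening ($B$ weakened, $A$ real, concluding $A\lor B$), \GS has no rule weakening $A$ into $A\lor B$, so that weakening cannot leave the propositional region at all. In fact the normal form you are driving toward is unattainable: the (trivially prenex) sequent $\turnstile a, \bar{a}\lor b$ has no \GS proof in which all structural inferences lie below all propositional ones, since $b$ can only enter by a weakening strictly above the $\vR$; an atom-counting argument shows the same for contraction with $\turnstile (a\land a)\lor \bar{a}$. This is precisely why Gentzen's theorem is standardly stated with structural inferences \emph{permitted above} the midsequent, and the paper's phrase ``$M$ contains only propositional rules'' should be read as ``$M$ contains no quantifier inferences.'' With that corrected target your argument does close: only quantifier inferences, and structural inferences whose active formula is quantified (such formulae are never ancestors of a propositional principal formula, so they always permute as side formulae), need to be pushed down; every such step is a genuine reordering, your termination measure works, and the midsequent is quantifier-free because axioms are atomic and nothing above it introduces a quantifier.
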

\noindent From this one may easily extract a form of Herbrand's
theorem for prenex sequents, see for example \cite{TroelSchwi96BasProThe}.
\begin{table}
\small
\begin{center}
\[
\begin{prooftree}
\justifies
\turnstile a, \bar{a}
\using \Ax
\end{prooftree}\]
\[
\begin{prooftree}
\turnstile \G, A, B
\justifies
\turnstile \G, A\lor B
\using \vR
\end{prooftree}
\qquad\qquad
\begin{prooftree}
\turnstile \G, A \qquad \turnstile \G', B
\justifies 
\turnstile \G, \G', A \land B
\using \wR
\end{prooftree} \]
\[
\begin{prooftree}
\turnstile \G, A, A
\justifies
\turnstile \G, A
\using \CR
\end{prooftree}
\qquad\qquad\qquad\qquad
\begin{prooftree}
\turnstile \G
\justifies 
\turnstile \G, A
\using \WR
\end{prooftree}\]
\[
\qquad \qquad \quad \begin{prooftree}
\turnstile \G, A(t)
\justifies
\turnstile \G, \exists y.A
\using \existsR
\end{prooftree}
\qquad\qquad \qquad 
\begin{prooftree}
\turnstile \G, A(z)
\justifies 
\turnstile \G, \forall z.A
\using \forallR \quad \text z \notin  \fv( \G)
\end{prooftree}\]
\caption{System \GS}
\label{GS1}
\end{center}
\end{table}

The original theorem, as stated by Herbrand, was more general, and
stated in terms of a system of proofs for first-order classical logic.
The opening chapter of the Handbook of Proof Theory, by Buss
{\cite{buss98introproof}}, gives a readable presentation of a variant
of this system called ``Herbrand proofs''. The general version of
Herbrand's theorem can be rendered thus: a formula is valid if and
only if it has an Herbrand proof.  Buss gives a proof of this
statement, which relies on the following incorrect lemma: the system
$\GS$ given above is complete when the contraction rule is resticted
to quantifier-free formulae and formulae whose main connective is an
existential quanitifier.  To see that this does not hold, consider the
sequent
\[\sq{}{\forall x.A\w \forall x. B, (\exists x.\bar{A} \vee \exists
   x. \bar{ B})\w(\exists x.\bar{ A} \vee \exists x. \bar{ B})} \]
 \noindent The application of any rule of $\GS$ other than contraction
 on the rightmost conjunction yields an invalid sequent.

 Of course, Herbrand's theorem does hold in the form stated by Buss.
 In this note we give a repaired proof of Herbrand's theorem which,
 like Buss's attempt, derives the theorem from the cut-free
 completeness of $\GS$.  We give a construction yielding, from a
 cut-free $\GS$ proof of a formula $\Gamma$, an Herbrand proof of
 $\G$; thus the general Herbrand's theorem is shown to be a corollary
 of the \emph{general} cut-elimination for the first-order classical
 sequent calculus, rather than of the midesequent theorem.  We prove
 this by showing
 that each rule of $\GS$ is \emph{admissible} in the Herbrand proofs
 system: given an Herbrand proof of the premises one may obtain an
 Herbrand proof of the conclusion.  The only non-trivial case is that
 of contraction, where the admissibility of contraction for formulae
 of rank $<n$ is not enough to demonstrate admissibility of
 contraction on rank $n$ formulae; instead, we show that a more
 general \emph{deep} contraction rule is admissible.

\subsection{Conventions}
Formulae of first-order logic are always written in negation normal
form (that is, negation is primitive only at the level of atoms, with
the negation of a general formula being given by the De Morgan laws).
The \emph{rank} of a formula is its depth as a tree.  We consider
formulae of first-order logic modulo the renaming of bound variables
($\alpha$-equivalence).  A formula $A$ will said to be
\emph{alpha-normal} if there is at most one occurence of a quantifier
$q.x$ in $A$, where $x$ is a variable and $q$ either $\forall$ or
$\exists$.  Every formula $A$ is $\alpha$-equivalent to an alpha-normal
formula.  A formula $B$ is in \emph{prenex normal form} if it has the
form $Q.M$, where $M$ contains no quantifiers and $Q$ is a sequence of
quantifiers.  In that case, we call $M$ the \emph{matrix} of $B$.

We assume a particular form of variable use for sequent proofs in
$\GS$. Variables should be used strictly: each universal rule binds a
unique eigenvariable, and that eigenvariable occurs only in the
subproof above the rule which binds it.  Further, we enforce a
Barendregt-style convention on the use of variables: the sets of bound
and free variables appearing in a proof should be disjoint.

\section{Herbrand proofs}
 
We give first the definition of Herbrand proofs as formulated by Buss
\cite{buss95onherb}.  
\begin{remark}
  We consider, for cleanness of presentation, only pure first-order
  logic over a signature of relation symbols and function symbols,
  containing at least one constant symbol.  Extending our approach to
  one dealing theories containing equality or with nonempty sets of
  nonlogical axioms may be done with no change in the shape of our
  argument.
\end{remark}

We begin with three key definitions:
\begin{definition}
Let $A$ be a formula in negation normal form.  An
\emph{$\lor$-expansion} of $A$ is any formula obtained from $A$ by a
finite number of applications of the following operation:

If $B$ is a subformula of an $\lor$-expansion $A'$ of $A$, replacing
$B$ in $A'$ with $B\lor B$ produces another $\lor$-expansion of $A$.

A \emph{strong $\lor$-expansion} of $A$ is defined similarly, except
that now the formula $B$ is restricted to be a subformula with
outermost connective an existential quantifier.
\end{definition}
(Note that by this definition, $A$ is a (strong-$\lor$) expansion of
itself.) From now on, we will abbreviate ``strong $\lor$-expansion''
to ``expansion''.  An expansion $\hat{\G}$ of a sequent $\G = A_1
\dots A_n$ is a sequence $\hat{A_1} \dots \hat{A_n}$ of expansions of
the members of $\G$.

\begin{definition} Let $A$ be an alpha-normal formula.  A \emph{prenexification} of $A$
  is a formula $B$ in prenex normal form derived from $A$ by successive applications of the
  operations
\[qx.A  * B \rightsquigarrow qx.(A  * B)  \qquad A  * qx.B \rightsquigarrow qx.(A  * B)\] 
\noindent(where $q$ is either $\forall $ or $\exists $, and $*$ is
either $\land$ or $\lor$).  If $\bigvee \G$ is alpha-normal, a
prenexification of $\G$ is a prenexification of $\bigvee
\G$.
\end{definition}

\begin{definition} 
  Let $A$ be a valid alpha-normal first-order formula in prenex normal
  form.  If $A$ contains $r \geq 0$ existential quantifiers, then $A$
  is of the following form, with $B$ quantifier free: \[ (\forall x_1
  \cdots \forall x_{n_1})(\exists y_1)(\forall x_{n_1 + 1} \cdots
  \forall x_{n_2})(\exists y_2) \cdots (\exists y_r)(\forall x_{n_r +
    1} \cdots \forall x_{n_{r+1}}) B(\bar{x}, \bar{y})\] \noindent
  with $0 \leq n_1 \leq n_2 \leq \cdots \leq n_{r+1}$.  A
  \emph{witnessing substitution} for $A$ is a sequence of terms $t_1,
  \dots, t_r$ such that (1) each $t_i$ contains arbitrary free
  variables but only bound variables from $x_1,\dots, x_{n_i}$, and
  (2) the formula $B(\bar{x}, t_1, \dots, t_n)$ is a tautology.

\end{definition}

\noindent We are now ready to define Herbrand proofs:

\begin{definition}[Buss] An \emph{Herbrand proof} of a first-order
  formula $A$ consists of a prenexification $A^*$ of a strong
  $\lor$-expansion of $A$, plus a witnessing substitution $\sigma$ for
  $A^*$.
\end{definition}

We will need the more general notion of an Herbrand proof of a sequent:
\begin{definition} An \emph{Herbrand proof} of a sequent $\G$ is
  a triple consisting of a strong~$\lor$-expansion $\hat{\G}$ of
  $\G$, a prenexification $\G^*$ of  $\hat{\G}$, and a witnessing substitution $\sigma$ for $\G^*$.
\end{definition}
\section{The proof of Herbrand's theorem}

We show next that the system of Herbrand proofs as given above is
complete --- each valid sequent has an Herbrand proof.  We prove that
each rule of the system $\GS$ is \emph{admissible}; that is, whenever
we we have an Herbrand proof or proofs of the premises, we have an
Herbrand proof of the conclusion. Since $\GS$ is complete for
first-order classical logic, this will be enough to show completeness
of the Herbrand proofs system.  Proving admissibility is trivial for
most of the rules of $\GS$, and we leave the proof as an exercise:
\begin{proposition}
\label{prop:admis}
  Let $\rho\in\finset{\Ax, \wR, \vR, \forallR, \existsR}$.  Then,
  for any instance of $\rho$, if there is are Herbrand proofs of the
  premisses, there is an Herbrand proof of the conclusion.
\end{proposition}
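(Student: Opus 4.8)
The plan is to treat each rule in $\finset{\Ax, \wR, \vR, \forallR, \existsR}$ separately, in each case unpacking the Herbrand proofs of the premisses into their three components---a strong $\lor$-expansion, a prenexification, and a witnessing substitution---and assembling these into the three components for the conclusion. Throughout I will exploit the fact that each of these rules acts on the principal formula in a way that is compatible with expansion, prenexification, and substitution: none of them requires reasoning about how two copies of a formula interact, which is precisely what makes contraction (deliberately excluded here) the hard case treated later.

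First I would dispatch $\Ax$, the axiom $\turnstile a, \bar a$. Here the sequent is already quantifier-free, so the trivial expansion and prenexification suffice, and the empty witnessing substitution works because $a \lor \bar a$ is a tautology; this is the base case and requires no inductive hypothesis. For $\vR$, with premiss $\turnstile \G, A, B$ and conclusion $\turnstile \G, A \lor B$, I would take the given expansion $\hat\G, \hat A, \hat B$ and form $\hat\G, \hat A \lor \hat B$; since prenexification merges adjacent quantifier-prefixes across $\lor$ by the rewriting $A * qx.B \rightsquigarrow qx.(A * B)$, the existing prenexification and witnessing substitution transport essentially unchanged, the tautology condition on the matrix being unaffected by regrouping the propositional skeleton. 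The rule $\existsR$, with premiss $\turnstile \G, A(t)$ and conclusion $\turnstile \G, \exists y.A$, is the case where the witnessing substitution does real work: the term $t$ supplied by the rule becomes (a component of) the witnessing term for the newly exposed existential quantifier, and I must check that $t$ meets the variable-occurrence condition (1) of the witnessing-substitution definition, which follows from the strict/Barendregt variable conventions imposed on $\GS$ proofs.

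For $\forallR$, with premiss $\turnstile \G, A(z)$ and conclusion $\turnstile \G, \forall z.A$ under the eigenvariable side-condition $z \notin \fv(\G)$, the key point is that introducing a universal quantifier does not call for any new witnessing term; rather, the eigenvariable $z$ is abstracted into a bound variable of the prenex form. The side-condition $z \notin \fv(\G)$, together with the strictness convention that the eigenvariable occurs only above the rule that binds it, guarantees that prenexifying $\forall z.A$ to the front of the quantifier prefix does not capture any free variable elsewhere in the sequent and does not disturb condition (1) for the existing witnessing terms. Finally $\wR$, with two premisses $\turnstile \G, A$ and $\turnstile \G', B$ and conclusion $\turnstile \G, \G', A \land B$, requires combining two given Herbrand proofs: I would juxtapose the two expansions and merge the two prenexifications into a single prenex form, relying on the convention that the two subproofs use disjoint variables so that their prenex prefixes may be interleaved without clash, and on the fact that the conjunction of two tautologies (after substitution) is again a tautology.

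The main obstacle I anticipate is bookkeeping rather than conceptual: ensuring at each step that the variable-occurrence restriction (1) of the witnessing substitution and the side-condition on $\forallR$ interact correctly with the act of prenexification, so that no witnessing term for an existential quantifier comes to depend illicitly on a universally bound variable introduced to its right. The Barendregt-style and strictness conventions on $\GS$ proofs are exactly what is needed to keep this bookkeeping honest, and it is for this reason that the proposition is, as the authors note, left as a routine exercise; the genuine difficulty is deferred to the contraction case.
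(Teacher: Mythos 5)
The paper offers no proof of this proposition---it is explicitly left as an exercise---and your case analysis is exactly the intended routine argument: trivial data for $\Ax$, regrouping under associativity for $\vR$, $t$ becoming the new witnessing term for $\existsR$, prepending $\forall z$ to the prefix for $\forallR$, and interleaving prefixes for $\wR$, with the variable bookkeeping (condition (1), the eigenvariable side-condition, and the strictness/Barendregt conventions) handled correctly. The only point worth tightening is $\wR$: the matrix of the combined prenexification is forced to be $G \lor G' \lor (A^* \land B^*)$, which is not literally the conjunction of the two premiss matrices $G \lor A^*$ and $G' \lor B^*$ but is propositionally entailed by it, so your appeal to ``the conjunction of two tautologies is a tautology'' needs one additional distribution step to verify condition (2).
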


The admissibility of weakening relies on the presence of a constant in
the signature over which we work:
\begin{proposition}
\label{prop:weak}
Let $A$ be a formula of first-order logic.  Then if $\G$ has an Herbrand
proof, so does $\G, A$.  
\end{proposition}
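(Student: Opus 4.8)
The plan is to construct an Herbrand proof of $\G, A$ directly from the given triple $(\hat\G, \G^*, \sigma)$ witnessing $\G$, leaving $A$ unexpanded and witnessing its existential quantifiers by the constant symbol $c$ that the signature is assumed to contain. Note first that the existence of $\sigma$ makes $\G^*$, and hence $\bigvee\G$ and $(\bigvee\G)\lor A$, valid, so $\G, A$ is valid and we are entitled to seek a witnessing substitution for a prenexification of it.

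First I would build the required expansion of the sequent $\G, A$ by appending to $\hat\G$ the trivial expansion of $A$, namely $A$ itself (recall that $A$ is a strong $\lor$-expansion of itself), obtaining $\hat\G, A$. Renaming the bound variables of $A$ to be fresh, which is harmless since we work modulo $\alpha$-equivalence, I may assume $(\bigvee\hat\G)\lor A$ is alpha-normal, so that a prenexification of it is defined.

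Next I would produce, as prenexification of $\hat\G, A$, a formula that \emph{extends} the given $\G^*$. Replaying on the left subformula of $(\bigvee\hat\G)\lor A$ the very steps that carry $\bigvee\hat\G$ to $\G^* = Q_\G.M_\G$, then pulling the prefix $Q_\G$ out past the disjunct $A$, and finally prenexifying $A$ to some $Q_A.M_A$ and pulling $Q_A$ out, yields the prenex formula $(\G,A)^* = Q_\G\, Q_A.(M_\G \lor M_A)$. I would then define $\tau$ to agree with $\sigma$ on the existential quantifiers coming from $Q_\G$ and to send each existential quantifier of $Q_A$ to $c$, and verify the two conditions on a witnessing substitution. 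Condition (2) is immediate: under $\tau$ the matrix becomes $M_\G[\sigma] \lor M_A[\tau]$, and since $\sigma$ witnesses $\G^*$ the disjunct $M_\G[\sigma]$ is already a tautology, so the whole disjunction is a tautology whatever $M_A[\tau]$ may be.

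The crux, and the only place the constant is used, is condition (1). For the existentials inherited from $Q_\G$, the term $\sigma(y_i)$ uses only the universal variables of $Q_\G$ preceding $y_i$, and these still precede $y_i$ in the prefix $Q_\G\,Q_A$, so those terms remain admissible unchanged. For an existential of $Q_A$, however, I must still exhibit \emph{some} admissible witnessing term, and one preceded by no universal quantifier cannot use any bound variable — nor, if $A$ is closed, any free variable. The constant $c$ contains no bound variables at all and is therefore admissible in every position; this is precisely why the admissibility of weakening depends on the signature containing a constant. With both conditions verified, $(\hat\G, A,\ (\G,A)^*,\ \tau)$ is the desired Herbrand proof of $\G, A$.
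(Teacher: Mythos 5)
Your proposal is correct and matches the paper's proof in all essentials: the paper likewise appends $A$ unexpanded to $\hat{\G}$, prenexifies $A$ separately with fresh bound variables as $Q'.D$, concatenates the prefixes to get $Q'.Q.(C\lor D)$, and extends $\sigma$ by assigning the constant $\mathsf{c}$ to the existential variables of $Q'$, with the tautology condition holding because the disjunct coming from $\G$ is already a tautology. The only difference is your prefix order $Q_\G\,Q_A$ versus the paper's $Q'.Q$, which is immaterial since both orders keep the old witnessing terms admissible and the constant needs no preceding universals.
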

\begin{proof}
  Let $(\hat{\G}, Q.C, \sigma)$ be an Herbrand proof of $\G$.  Let
  $Q'.D$ be a prenexification of $A$ sharing no bound variables with
  $Q.C$.  Then we form an Herbrand proof
  \[((\hat{\G}, A),\  Q'.Q.(C\lor D), \  \sigma') \]
  of $\G, A$, where $\sigma'$ assigns the same term as $\sigma$ to 
  existential quantifiers in $Q$, and assigns a constant term
  $\mathsf{c}$ to all existentially bound variables in $Q'$.
\end{proof}

The only rule to pose some difficulty is contraction. We would like to
prove contraction admissible by induction on the rank of a formula to
be contracted, but this induction hypothesis is not strong enough.  To
see this, suppose that we have shown contraction admissible for all
formulae of rank $\leq n$, and let $A \land B$ have rank $n+1$.  Given
an Herbrand proof
\[((\hat{\Gamma}, \hat{A}_1\land\hat{B}_1, \hat{A}_2\land\hat{B}_2), \ 
\G^*, \  \sigma)\] 
of $\Gamma, A \land B, A \land B$, how do we use our induction
hypothesis to produce a proof of $\Gamma, A \land B$ ? We can get
close by using the valid implication
\begin{equation}
\label{eq:medial}
 (A \land B) \lor (C \lor D) \implies (A \lor C) \land (B \lor D). 
\end{equation}
\begin{remark}
  This implication plays an important role in the proof-theoretic
  formalism known as \emph{deep inference}, where it is known as
  \emph{medial}.  It is used as an inference rule in Br\"unnler's system SKS
  \cite{BruLCL06} to reduce contraction to atomic form.  Its use here
  is similar.
\end{remark}

\begin{lemma}
\label{lem:medial}
If 
\[((\hat{\Gamma}, \hat{A}_1\land\hat{B}_1, \hat{A}_2\land\hat{B}_2), \ 
Q.C, \  \sigma)\] 
is an Herbrand proof of $\Gamma, A \land B, A \land B$, then 
\[((\hat{\Gamma}, (\hat{A}_1\lor \hat{A}_2)\land
(\hat{B}_1\lor\hat{B}_2)), \ Q.C', \ \sigma)\] is an Herbrand proof of
$\Gamma, (A \lor A)\land(B \lor B)$, where $Q.C'$ is the unique (up to
associativity of $\lor$) prenexification of $\hat{\Gamma},
(\hat{A}_1\lor \hat{A}_2)\land (\hat{B}_1\lor\hat{B}_2)$ with
quantifier prefix $Q$.
\end{lemma}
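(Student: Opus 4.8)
The plan is to verify the three components of an Herbrand proof --- the strong $\lor$-expansion, the prenexification with prefix $Q$, and the witnessing substitution --- for the proposed triple, reusing as much of the given proof as possible. The expansion component is the easiest: since each $\hat{A}_i$ is a strong $\lor$-expansion of $A$ and each $\hat{B}_i$ a strong $\lor$-expansion of $B$, the formula $(\hat{A}_1 \lor \hat{A}_2) \land (\hat{B}_1 \lor \hat{B}_2)$ is obtained from $(A \lor A)\land(B\lor B)$ by strongly expanding the two occurrences of $A$ into $\hat{A}_1,\hat{A}_2$ and the two occurrences of $B$ into $\hat{B}_1,\hat{B}_2$; hence $\hat{\Gamma}, (\hat{A}_1\lor\hat{A}_2)\land(\hat{B}_1\lor\hat{B}_2)$ is a strong $\lor$-expansion of $\Gamma, (A\lor A)\land(B\lor B)$.

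The crux of the argument is that this new expansion admits a prenexification with the very same prefix $Q$. I would argue this by observing that a prenexification is determined by a linear ordering of the quantifier occurrences respecting the partial order induced by their nesting in the formula tree, and that the propositional connectives $\land$ and $\lor$ impose no ordering constraint between quantifiers lying in distinct branches. The rearrangement from $(\hat{A}_1\land\hat{B}_1)\lor(\hat{A}_2\land\hat{B}_2)$ to $(\hat{A}_1\lor\hat{A}_2)\land(\hat{B}_1\lor\hat{B}_2)$ leaves the internal structure of each $\hat{A}_i,\hat{B}_i$ untouched and merely permutes the enclosing propositional skeleton; consequently the two formulae carry exactly the same quantifier-nesting partial order, namely the disjoint union of the internal orders of $\hat{\Gamma}, \hat{A}_1, \hat{B}_1, \hat{A}_2, \hat{B}_2$. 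Since $Q$ was a linearisation of that partial order for the original formula, it is also one for the new formula, so a prenexification with prefix $Q$ exists; fixing $Q$ then pins down the matrix up to associativity of $\lor$, which defines $Q.C'$.

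It then remains to check that $\sigma$ witnesses $Q.C'$. Writing $a_i, b_i$ and $g$ for the quantifier-free matrices of $\hat{A}_i, \hat{B}_i$ and $\bigvee\hat{\Gamma}$, stripping the common prefix $Q$ yields $C = g \lor (a_1 \land b_1) \lor (a_2 \land b_2)$ and $C' = g \lor ((a_1 \lor a_2)\land(b_1\lor b_2))$. Because $Q$ and $\sigma$ are unchanged, $\sigma$ assigns terms to the same existential variables and automatically satisfies the Herbrand dependency condition, so it suffices to show that $C'[\sigma]$ is a tautology given that $C[\sigma]$ is. This follows from the implication \eqref{eq:medial}: instantiating it with $a_1[\sigma], b_1[\sigma], a_2[\sigma], b_2[\sigma]$ and prefixing both sides with $g[\sigma]\lor$ (monotonicity of $\lor$) gives $C[\sigma] \implies C'[\sigma]$, so the fact that $C[\sigma]$ is a tautology transfers to $C'[\sigma]$.

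The main obstacle I anticipate is the prenexification step, specifically making rigorous the claim that the same prefix $Q$ is reusable. This requires a careful bookkeeping argument that the prenexification operation depends only on the quantifier-dependency partial order and not on the propositional arrangement, together with a check that the Barendregt-style variable conventions continue to hold after the rearrangement, so that no quantifier is captured or escapes a scope it should not. Once that is settled, the expansion and witnessing-substitution components fall out immediately from the definitions and the medial implication.
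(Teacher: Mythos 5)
Your proposal is correct and follows essentially the same route as the paper: check that the new formula is a strong $\lor$-expansion, reuse the prefix $Q$, and transfer the tautology from $\sigma(C)$ to $\sigma(C')$ via the medial implication \eqref{eq:medial}, with condition (1) on the witnessing terms holding automatically since $Q$ and $\sigma$ are unchanged. Your linearisation-of-the-quantifier-nesting-order argument for why the same prefix $Q$ is available is in fact more explicit than the paper, which simply asserts that the expansion ``has a prenexification of the form $Q.C'$'' and concentrates on the tautology check.
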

\begin{proof}
  It is clear that $\hat{\Gamma}, (\hat{A}_1\lor \hat{A}_2)\land
  (\hat{B}_1\lor\hat{B}_2)$ is an expansion of \mbox{$\Gamma, (A \lor
  A)\land(B \lor B)$}, and has a prenexification of the form $ Q.C'$.
  We must check that $\sigma$ is a witnessing substitution for $
  Q.C'$.  Since it is a witnessing substitution for $Q.C$, it
  satisfies the condition on free variables of substituting terms, and
  we need only check that $\sigma(C')$ is a tautology.  Let $A_i^*$ be
  the matrix of $\hat{A}_i$, $B_i^*$ be the matrix of $\hat{B}_i$, and
  $G$ be the matrix of $\hat{\G}$.  Then we know, since $\sigma$ is
  a witnessing substitution for $Q.C$, that
  \[ \sigma(G) \lor (\sigma(A^*_1)\land
  \sigma(B^*_1))\lor(\sigma(A^*_2)\land \sigma(B^*_2)) \]
  is a tautology.  Applying \eqref{eq:medial}, we conclude that
   \[ \sigma(C') =\sigma(G) \lor (\sigma(A^*_1)\lor
  \sigma(A^*_2))\land(\sigma(B^*_1)\lor \sigma(B^*_2)) \]
  is a tautology.
\end{proof}

Of course in the sequent calculus one can only apply contractions
across a comma, so even in this case we may not apply our induction
hypothesis.  To move forward we will need to show admissibility of a
``deep'' contraction rule, which can act on arbitrary subformulae in
the ednsequent.  Admissibility of ordinary, ``shallow'', contraction
follows immediately.  We will need the following definitions:
\begin{definition}
\begin{enumerate}
\item A \emph{one-hole-context} is a sequent with precisely 
one positive occurrence of the special atom $\thehole{}$ (the hole).  We write
$\Gamma \thehole{}$ to denote a one hole context.
\item An \emph{$n$-hole-context} is a sequent with precisely one
  positive occurrence each of the $n$ special atoms $\thehole{}_1
  \dots \thehole{}_n$.  We write $\Gamma \thehole{} \dots \thehole{}$
  to denote an $n$ hole context, where by convention $\thehole{}_1$ is
  the leftmost hole in the sequent etc..
\item If $\G\thehole{}$ is a one hole context, we write $\Gamma\thehole{A}$
for the sequent given by replacing the hole with $A$.  Similarly for
$n$ hole contexts.
\end{enumerate}
\end{definition} 
The following easy lemma will be crucial.
\begin{lemma}
An expansion of a sequent $\Gamma\thehole{A}$ has the form
$\hat{\Gamma} \thehole{A_1} \dots \thehole{A_n}$, where $A_1 \dots A_n$
are expansions of $A$ and $\hat{\Gamma} \thehole{} \dots \thehole{}$
is an expansion of $\Gamma \thehole{}$
\end{lemma}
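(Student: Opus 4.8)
The plan is to induct on the number $k$ of strong $\lor$-expansion operations used to produce the given expansion $E$ of $\Gamma\thehole{A}$. When $k=0$ we have $E = \Gamma\thehole{A}$, so we take $n=1$, $A_1 = A$, and the context expansion to be $\Gamma\thehole{}$ itself; since every formula is a strong $\lor$-expansion of itself, the base case is immediate.

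For the inductive step, suppose $E$ arises from an expansion $E'$ by a single operation, where $E'$ has already been decomposed by the induction hypothesis as $\hat{\Gamma}'\thehole{A_1'}\dots\thehole{A_m'}$, with each $A_i'$ an expansion of $A$ and $\hat{\Gamma}'\thehole{}\dots\thehole{}$ an expansion of $\Gamma\thehole{}$. The operation selects a subformula $B$ of $E'$ whose outermost connective is existential and replaces it by $B\lor B$. The key observation is that $B$, being a single subtree, cannot straddle the boundary of a hole-filler: each occurrence $A_i'$ is itself a subtree, so $B$ is either contained strictly within some $A_i'$, or disjoint from it, or contains it entirely. This yields a clean dichotomy.

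In the first case $B$ lies strictly inside a single $A_i'$; the step then expands $A_i'$ to a new strong $\lor$-expansion $A_i''$ of $A$, the positions of the holes are untouched, and the required decomposition of $E$ is obtained by replacing $A_i'$ with $A_i''$. In the remaining case $B$ lies in the context and contains some (possibly empty) collection of the fillers $A_j'$ entirely. Here I would read off from $B$ the corresponding context subformula $\bar{B}$, obtained by putting the hole back in place of each contained $A_j'$; because the hole is a special \emph{atom}, $\bar{B}$ has the same outermost (existential) connective as $B$, so replacing $\bar{B}$ by $\bar{B}\lor\bar{B}$ is a legitimate expansion operation on $\hat{\Gamma}'\thehole{}\dots\thehole{}$, producing a new context that is still an expansion of $\Gamma\thehole{}$. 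The duplicated holes correspond exactly to duplicated, identical copies of the fillers $A_j'$ appearing in $E$; after renumbering the holes from left to right this gives the desired form $\hat{\Gamma}\thehole{A_1}\dots\thehole{A_n}$, with each $A_i$ an expansion of $A$.

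The only delicate points are the clean case split, which rests on the fact that subformula occurrences nest rather than overlap, and the transfer in the second case: one must verify that treating the hole as an atom makes the outermost-connective side-condition for strong $\lor$-expansion agree between $B$ and $\bar{B}$, and keep careful track of the bookkeeping as holes (and their fillers) are duplicated and reindexed. Neither presents a genuine obstacle, so I expect the argument to be a routine induction once the dichotomy is set up.
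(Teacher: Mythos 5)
The paper never actually proves this lemma---it is stated as ``easy'' and left to the reader---so your attempt can only be measured against the intended folklore argument, which is exactly the induction you set up: induct on the number of strong $\lor$-expansion steps, using the fact that two subformula occurrences in a tree are either disjoint or nested. Your base case, your first case, and your bookkeeping for duplicated holes and fillers are all sound.

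One step fails as written, however, and it sits precisely in the spot you flagged as delicate. Your trichotomy sends the boundary case $B = A_i'$ (the duplicated subformula coincides exactly with a hole-filler) into your second case, and there the key claim---that the context subformula $\bar{B}$ has the same outermost existential connective as $B$---is false: $\bar{B}$ is then the bare hole $\thehole{}_i$, which is an \emph{atom}, so replacing $\bar{B}$ by $\bar{B}\lor\bar{B}$ is not a legitimate strong $\lor$-expansion of the context, and your transfer argument collapses for this occurrence. The repair is to route equality through your first case instead: since the operation was applicable to $B = A_i'$, the filler $A_i'$ has an existential quantifier outermost, hence $A_i'\lor A_i'$ is a one-step strong $\lor$-expansion of $A_i'$ and therefore an expansion of $A$; the context and all other fillers are untouched. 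Concretely, read case 1 as ``$B$ is contained, not necessarily strictly, in some $A_i'$,'' and restrict case 2 to occurrences $B$ whose root is a node of the context, i.e.\ such that $B$ \emph{properly} contains every hole it meets---there the outermost connectives of $B$ and $\bar{B}$ really do agree, and your argument goes through. With that one reassignment the induction is correct.
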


\begin{lemma}
The deep contraction rule
\[\begin{prooftree}
\G\thehole{A \lor A}
\justifies
\G\thehole{A}
\using 
\mathrm{Deep}\C
\end{prooftree}\]
\noindent is admissible for Herbrand proofs.
\end{lemma}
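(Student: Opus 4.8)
The plan is to prove admissibility of deep contraction by induction on the rank of the formula $A$ being contracted, using the medial lemma (Lemma~\ref{lem:medial}) as the engine that pushes the $\lor$ one rank deeper at each step. The base case is when $A$ is atomic or has an existential quantifier as its outermost connective: here $A\lor A$ is already a (strong) $\lor$-expansion collapsing structure, so the expansion $\hat\Gamma\thehole{A\lor A}$ is already an expansion of $\Gamma\thehole{A}$, and contraction is immediate from the definition of expansion. The inductive step splits on the outermost connective of $A$.

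Let me think about the cases. Given an Herbrand proof $(\hat\Gamma\thehole{\widehat{A\lor A}}, Q.C, \sigma)$ of $\Gamma\thehole{A\lor A}$, I use the preceding easy lemma to write the expansion in the hole as $\widehat{A_1}\lor\widehat{A_2}$, where each $\widehat{A_i}$ is an expansion of $A$, sitting inside an expansion $\hat\Gamma\thehole{}$ of the context. If $A = A'\land B'$, then each $\widehat{A_i}$ has the form $\widehat{A'_i}\land\widehat{B'_i}$, and I apply Lemma~\ref{lem:medial} locally inside the hole to replace $(\widehat{A'_1}\land\widehat{B'_1})\lor(\widehat{A'_2}\land\widehat{B'_2})$ by $(\widehat{A'_1}\lor\widehat{A'_2})\land(\widehat{B'_1}\lor\widehat{B'_2})$; this yields an Herbrand proof of $\Gamma\thehole{(A'\lor A')\land(B'\lor B')}$. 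I then invoke the induction hypothesis twice --- deep contraction on $A'$ (rank $<n$) and on $B'$ (rank $<n$), each applied in the appropriate sub-hole --- to contract $A'\lor A'$ to $A'$ and $B'\lor B'$ to $B'$, arriving at $\Gamma\thehole{A'\land B'}=\Gamma\thehole{A}$. If $A = A'\lor B'$, the situation is easier: $\widehat{A_1}\lor\widehat{A_2}$ is an expansion of $(A'\lor B')\lor(A'\lor B')$, and since $\lor$ is associative and commutative up to the logic, I can reorganize (using that the tautology condition is invariant under permuting disjuncts) and apply the induction hypothesis to $A'$ and to $B'$ separately. The universal-quantifier case $A=\forall z.A'$ needs care with eigenvariables, but since the two copies share the same prefix $Q$ and $\sigma$ touches only existential witnesses, contraction beneath a universal reduces to contraction on the body $A'$.

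The main obstacle I expect is bookkeeping around prenexification and the quantifier prefix. Lemma~\ref{lem:medial} is stated so that the transformed expansion admits a prenexification with the \emph{same} prefix $Q$, and this is exactly what keeps $\sigma$ a valid witnessing substitution; I must check that applying the medial rewrite deep inside a context, rather than at the top level as in the lemma, preserves the property that there is a prenexification with an unchanged (up to the order forced by the new connective structure) quantifier prefix, so that $\sigma$ continues to witness it. The delicate point is that prenexification order depends on the nesting of $\land$ and $\lor$, and the medial step changes that nesting; I need to argue that the relevant quantifier dependencies (each $t_i$ using only bound variables from universals that scope over it) are unaffected because medial only rearranges propositional structure among formulae already in the expansion, never moving a quantifier out of the scope of a universal that previously bound a variable in its witness. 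Once this invariant is pinned down, each inductive case is a routine application of the easy lemma plus Lemma~\ref{lem:medial} plus the induction hypothesis, and ordinary (shallow) contraction follows by taking the hole at the top level of a formula in the sequent.
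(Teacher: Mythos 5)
Your propositional cases track the paper's own proof exactly: induction on rank, the medial implication for $A = A'\land B'$ (yielding an Herbrand proof of $\Gamma\thehole{(A'\lor A')\land(B'\lor B')}$ followed by two uses of the induction hypothesis), rearrangement of disjuncts for $A = A'\lor B'$, and absorption for existentials --- indeed the paper's observation is precisely that a disjunction of strong $\lor$-expansions of $\exists x.B$ is itself a strong $\lor$-expansion of $\exists x.B$, so that case needs no induction at all. One slip in your base case: for atomic $A = a$, the claim that $\hat{\Gamma}\thehole{a\lor a}$ ``is already an expansion of $\Gamma\thehole{a}$'' is false, since strong $\lor$-expansion duplicates only subformulae whose outermost connective is an existential quantifier, and the only expansion of an atom is the atom itself. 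You must instead replace $a\lor a$ by $a$ in the expansion and the matrix, noting that the new substitution instance remains a tautology by idempotence of $\lor$. That is easily repaired.

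The genuine gap is the universal case, which you dispatch in one sentence with a justification that does not hold up. Under the paper's alpha-normality convention the two contracted copies are $\forall x.B$ and $\forall y.B$ with \emph{distinct} bound variables, so the prefix $Q$ contains two distinct universal quantifiers for each hole, one per copy; they do not ``share the same prefix'' in any sense that lets you pass directly to contraction on the body. Worse, the assertion that ``$\sigma$ touches only existential witnesses'' misses the real difficulty: the witnessing terms $t_i$ are built from the universally bound variables of $Q$, so they may contain exactly the eigenvariables that must be eliminated when the two quantifiers are fused. The paper's construction supplies what is missing: for each pair let $z_i$ be whichever of $x_i, y_i$ occurs first in $Q$ and $w_i$ the second; delete each $\forall w_i$ from $Q$ to obtain $Q'$, and rename $w_i := z_i$ throughout the witnessing terms to obtain $\sigma'$; this yields an Herbrand proof of $\Gamma\thehole{\forall z.(B\lor B)}$, to which the induction hypothesis then applies. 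Two verifications are owed and are absent from your sketch: that $\sigma'$ applied to the new matrix is still a tautology (it is a substitution instance of the old tautology), and that the dependency condition on witnesses survives --- it does because $z_i$ precedes $w_i$ in $Q$, so the renaming only moves dependencies leftward in the prefix. Without this merge-and-rename step, your reduction of contraction beneath a universal to contraction on the body has no derivation behind it.
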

\begin{proof}
By induction on the structure of $A$:
\begin{itemize}
\item Suppose we have
an Herbrand~proof 
\[(\hat{\Gamma}\thehole{a\lor a}\dots\thehole{a\lor a}, \ Q.C, \
\sigma)\]
of $\Gamma\thehole{a\lor a}$.  Then clearly there is an Herbrand proof
\[(\hat{\Gamma}\thehole{a}\dots\thehole{a}),\ Q.C',\ \sigma)\] of
$\Gamma\thehole{a}$.

Now suppose, for each remaining case, that deep contraction is admissible
for formulae of rank $\leq n$, and that $A$ has rank $n+1$.
\item  Suppose $A=B \lor C$, and that we have an Herbrand proof
\[(\hat{\Gamma}\thehole{(\hat{B}_{11}\lor
  \hat{C}_{11})\lor(\hat{B}_{12}\lor
  \hat{C}_{12})}\dots\thehole{(\hat{B}_{n1}\lor
  \hat{C}_{n1})\lor(\hat{B}_{n2}\lor \hat{C}_{n2})}, \ Q.C, \ \sigma)\] of
$\Gamma\thehole{(C\lor D)\lor( C\lor D)}$. Then 
\[(\hat{\Gamma}\thehole{(\hat{B}_{11}\lor
  \hat{B}_{12})\lor(\hat{C}_{11}\lor
  \hat{C}_{12})}\dots\thehole{(\hat{B}_{n1}\lor
  \hat{B}_{n2})\lor(\hat{C}_{n1}\lor \hat{C}_{n2})}\] is an expansion
of of $\Gamma\thehole{(C\lor C)\lor(D \lor D)}$, with a
prenexification $Q.C'$; these two, plus $\sigma$, give us an Herbrand
proof of $\Gamma\thehole{(C\lor C)\lor(D \lor D)}$.  Apply the
induction hypothesis to obtain an Herbrand proof of
$\Gamma\thehole{C\lor D}$
\item 
Suppose $A= \exists x.B$, with $B$ of rank $n$.  An Herbrand
proof of \newline $\Gamma\thehole{\exists x.B\lor\exists y.B}$ has the form
\[(\hat{\Gamma}\thehole{\hat{A}_{11}\lor
  \hat{A}_{12}}\dots\thehole{\hat{A}_{m1}\lor \hat{A}_{m2}}, \ Q.C,
\ \sigma).\] 
But this is also an Herbrand proof of
$\Gamma\thehole{\exists x.B}$, since if $A_1$ and $A_2$ are expansions
of an existential formula $\exists x.B$, then so is $A_1 \lor A_2$.
\item
Suppose $A= \forall x.B$, with $B$ of rank $n$.  
 An Herbrand
proof of \newline $\Gamma\thehole{\forall x.B\lor \forall y.B}$ has the form
\[(\hat{\Gamma}\thehole{\forall x_1 \hat{B}_{11}\lor \forall
  y_1. \hat{B}_{12}}\dots\thehole{\forall x_m \hat{B}_{m1}\lor \forall
  y_m. \hat{B_{m2}}}, \ Q.C, \ \sigma).\] Suppose we are given such a
proof. We generate a new sequence $Q'$ of quantifiers as follows: let
$z_i$ stand for the first occurrence a member of $\finset{x_i, y_i}$
in $Q$ and $w_i$ for the second.  Let $Q'$ be the result of deleting
each occurence of $\forall w_i$ from $Q$ (so that, from each pair
$\finset{\forall x_i, \forall y_i}$, we keep the first and discard the
second.)  $Q'$ contains the same existential variables $x_1, \dots,
x_r$ as $Q$, and in the same order.  Let $\sigma = t_1, \dots t_r$.
Let $t'_i = t_i [w_1:=z_1]\dots [w_m:=z_m]$, and let $\sigma' = t'_1,
\dots t'_r$.

The following is then  an Herbrand proof of $\Gamma\thehole{\forall z.(B\lor B)}$:
\[(\hat{\Gamma}\thehole{\forall z_1 ((\hat{B}_{11}\lor  \hat{B}_{12})
  [w_1:=z_1])}\dots\thehole{\forall z_m ((\hat{B}_{m1}\lor
  \hat{B}_{m2}) [w_m:=z_m])}, Q'.C', \ \sigma'),\]  
By the induction hypothesis, we derive an Herbrand proof of $\Gamma\thehole{\forall z.B}$.

\item Finally, suppose that $A = B\land C$. Then, if we have a
  Herbrand proof of $\Gamma\thehole{(B\land C)\lor(B\land C)}$, it
  consists of an expansion of the form
\[\hat{\Gamma}\thehole{(\hat{B}_{11}\land
    \hat{C}_{11})\lor(\hat{B}_{12}\land
    \hat{C}_{12})}\dots\thehole{(\hat{B}_{m1}\land
    \hat{C}_{m1})\lor(\hat{B}_{m2}\land \hat{C}_{m2})}\]
a prenexification $Q.C$ and a substitution $\sigma$.  The formula 
\[\hat{\Gamma}\thehole{(\hat{B}_{11}\lor
  \hat{B}_{12})\land(\hat{C}_{11}\lor
  \hat{C}_{12})}\dots\thehole{(\hat{B}_{m1}\lor
  \hat{B}_{m2})\land(\hat{C}_{m1}\lor \hat{C}_{m2})}\] is an expansion
of $\Gamma\thehole{(B\lor B)\land(C\lor C)}$, and it can be easily
seen that it has a prenexification of the form $Q.C'$.  By the same
reasoning used to prove Lemma~\ref{lem:medial}, $\sigma(C')$ is a
tautology, and therefore $\sigma$ is a witnessing substitution for
$Q.C'$.  This gives an Herbrand~proof of $\Gamma\thehole{(B\lor
  B)\land(C\lor C)}$.  Apply the induction hypothesis twice to obtain
an Herbrand proof of $\Gamma\thehole{B\land C}$
\end{itemize}
\end{proof}

\begin{corollary}
  \label{cor:contrad}
Contraction is admissible for Herbrand proofs.
\end{corollary}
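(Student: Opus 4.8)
The plan is to reduce ordinary contraction to the deep contraction rule whose admissibility we have just established. Recall that the $\GS$ contraction rule takes an Herbrand-provable premiss $\turnstile \G, A, A$ to the conclusion $\turnstile \G, A$; the two copies of $A$ being contracted sit in distinct sequent positions, separated by a comma, so we cannot directly invoke deep contraction, which operates on a single subformula occurrence $A \lor A$ sitting inside a one-hole context.

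First I would merge the two comma-separated copies into a single disjunction. By the admissibility of $\vR$ (Proposition~\ref{prop:admis}), from an Herbrand proof of $\turnstile \G, A, A$ we obtain an Herbrand proof of $\turnstile \G, A \lor A$. Next, observe that $\G, A \lor A$ is precisely $\G\thehole{A \lor A}$ for the one-hole context $\G\thehole{}$ whose hole occupies the final sequent position; this occurrence is positive, so $\G\thehole{}$ genuinely qualifies as a one-hole context. Applying the admissibility of the deep contraction rule to this Herbrand proof then yields an Herbrand proof of $\G\thehole{A} = \G, A$, which is exactly the desired conclusion.

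Since all the combinatorial work has been front-loaded into the deep contraction lemma---in particular the medial-style rearrangement of expansions (Lemma~\ref{lem:medial}) and the induction on the rank of the contracted formula---no genuine obstacle remains at this stage: the corollary follows immediately by composing the two admissibility results already in hand. The only point meriting a moment's care is the verification that the hole introduced in the penultimate step occupies a positive position, so that the resulting sequent really is a one-hole context in the sense the deep contraction rule requires; this is immediate because the hole replaces a top-level formula of the sequent.
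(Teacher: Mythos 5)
Your proposal is correct and is exactly the argument the paper intends: the corollary is stated without an explicit proof because, as the paper remarks before the deep contraction lemma, shallow contraction ``follows immediately,'' namely by composing the admissibility of $\vR$ (Proposition~\ref{prop:admis}) with the deep contraction rule applied in the one-hole context whose hole is the final sequent position. Your extra check that this hole occurrence is positive is a sensible (if immediate) verification, and nothing further is needed.
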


\begin{theorem}
A formula of first-order logic is valid if and only if it has 
an Herbrand~proof.
\end{theorem}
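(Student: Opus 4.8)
The plan is to prove the two directions separately. The \emph{completeness} direction (every valid formula has an Herbrand proof) will follow by assembling the admissibility results already established, while the \emph{soundness} direction (every formula with an Herbrand proof is valid) will follow from the fact that each ingredient of an Herbrand proof either preserves, or directly witnesses, validity.

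For completeness, I would invoke the cut-free completeness of \GS for first-order classical logic: every valid formula $A$, viewed as the one-element sequent $\sq{}{A}$, has a cut-free \GS proof. I would then argue by induction on the structure of this proof that every \GS-provable sequent has an Herbrand proof. The base case is the axiom, handled by Proposition~\ref{prop:admis}. For the inductive step, every rule of \GS is admissible for Herbrand proofs: the logical rules $\vR$, $\wR$, $\existsR$, $\forallR$ by Proposition~\ref{prop:admis}, weakening $\WR$ by Proposition~\ref{prop:weak}, and contraction $\CR$ by Corollary~\ref{cor:contrad}. Applying the relevant admissibility statement to the Herbrand proof or proofs of the premisses supplied by the induction hypothesis yields an Herbrand proof of the conclusion. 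Taking the root of the derivation gives an Herbrand proof of the sequent $\sq{}{A}$, which (for a one-element sequent) is exactly an Herbrand proof of $A$.

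For soundness, suppose $A$ has an Herbrand proof, consisting of a prenexification $A^*$ of a strong $\lor$-expansion $A'$ of $A$, together with a witnessing substitution $\sigma = t_1, \dots, t_r$ for $A^*$. I would first observe that $A'$ is logically equivalent to $A$, since each expansion step replaces a subformula $B$ by $B \lor B$, which is equivalent to $B$, and equivalence is preserved under replacement of subformulae. Similarly $A^*$ is equivalent to $A'$, since each prenexification step is an instance of a standard quantifier-movement law whose side condition on variables is guaranteed by alpha-normality. It therefore suffices to show that $A^*$ is valid. Writing $A^*$ in the prenex form of the witnessing-substitution definition, with quantifier-free matrix $B(\bar{x}, \bar{y})$, the defining property of $\sigma$ gives that $B(\bar{x}, t_1, \dots, t_r)$ is a tautology and that each $t_i$ mentions only the universally bound variables $x_1, \dots, x_{n_i}$ occurring to its left. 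Given any model and any valuation of the leading universal variables, interpreting each $y_i$ as the value of $t_i$ then satisfies the matrix; because $t_i$ depends only on the universals preceding $\exists y_i$, these are legitimate witnesses, so $A^*$ is true. As the model and valuation were arbitrary, $A^*$, and hence $A$, is valid.

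The content of the theorem has by this point been almost entirely absorbed into the preceding lemmas, so I expect no genuine obstacle in the final assembly; the single delicate case of completeness is contraction, and that is exactly the difficulty discharged by Corollary~\ref{cor:contrad} through the deep contraction lemma. In the soundness direction the only step meriting care is the verification that prenexification preserves logical equivalence, where the Barendregt-style variable conventions and alpha-normality are precisely what license the quantifier-movement laws being applied.
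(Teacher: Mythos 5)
Your completeness direction is exactly the paper's argument: induction on a cut-free \GS proof, discharging each rule via Proposition~\ref{prop:admis}, Proposition~\ref{prop:weak}, and Corollary~\ref{cor:contrad}, which is the entirety of the paper's one-line proof. The paper leaves the soundness direction implicit, and your explicit verification of it --- equivalence of $A$ with its expansion and prenexification, then validity of $A^*$ from the witnessing substitution by interpreting each $y_i$ as the value of $t_i$ --- is correct and unproblematic.
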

\begin{proof}
  Follows immediately from Propositions~\ref{prop:admis}
  and~\ref{prop:weak}, Corollary~\ref{cor:contrad} and the cut-free
  completeness of $\GS$.
\end{proof}

\section{Conclusions}
As we have seen, Herbrand's theorem in its full generality can be
seen as a consequence of cut-elimination for the sequent calculus (and
not, as usually claimed, of the midsequent theorem).  To show this, we
had to consider an extended sequent calculus with a deep contraction
rule, and show that each proof in that extended calculus gives rise to
an Herbrand proof.  This raises some potentially interesting
questions: are there Herbrand proofs which arise from a proof with
deep contraction, but not from any shallow proof?  If so, is there an
easy condition separating the ``shallow'' Herbrand proofs from the
``deep''?

For the special case of the prenex Herbrand theorem, the author has
studied in \cite{Mck10Herbnets} the elimination of \emph{cuts} in
Herbrand's theorem, giving a notion of Herbrand proofs with cut and
showing a syntactic cut-elimination theorem.  This works because of a
strong connection bewteen the structure of prenex Herbrand proofs and
the corresponding midesequent-factored sequent proofs.  The Herbrand
proofs we present in this paper have a similar strong connection to
proofs with deep contraction.  It is unlikely that we can find a
similar cut-elimination result for general Herbrand proofs without a
cut-elimination result for the system with deep contractions.
Syntactic cut elimination for the system with deep contraction
seems to be a very challenging problem.

This note began with the observation that restricting the contraction
rule to existential and quantifier-free formulae broke completeness.
The crucial observation is that contraction on $A\land B$ does not
follow inductively from contraction on $A$ and contraction on $B$.
Instead of moving to deep contraction, we can instead simply add back
contraction for conjunctions (so now we only disallow contraction on
disjunctions and universal quantifications).  This gives rise to an
Herbrand-like theorem in which first-order provability is reduced to
provability in a well-behave fragment of multiplicative linear logic.
This is ongoing work.

\bibliography{nets}

\begin{thebibliography}{1}

\bibitem{BruLCL06}
Kai {Br\"unnler}.
\newblock Locality for classical logic.
\newblock {\em Notre Dame Journal of Formal Logic}, 47:557--580, 2006.

\bibitem{buss95onherb}
S.~R. Buss.
\newblock On {Herbrand}'s theorem.
\newblock {\em Lecture Notes in Computer Science}, 960:195--209, 1995.

\bibitem{Gentzen34}
G.~Gentzen.
\newblock Untersuchungen {\"u}ber das logische {S}chlie{\ss}en.
\newblock {\em Mathematische Zeitschrift}, 39:176--210, 405--431, 1934.

\bibitem{Herbrand30}
Jacques Herbrand.
\newblock {\em Recherches sur la theorie de la demonstration}.
\newblock PhD thesis, Universite de Paris, 1930.

\bibitem{Mck10Herbnets}
Richard McKinley.
\newblock Proof nets for {H}erbrand's theorem.
\newblock arXiv:1005.3986v1, available at
  \url{http://arxiv.org/abs/1005.3986/}.

\bibitem{buss98introproof}
S.R.Buss.
\newblock {\em Handbook of Proof Theory}, chapter I: An Introduction to Proof
  Theory.
\newblock Elsevier, 1998.

\bibitem{TroelSchwi96BasProThe}
A.~S. Troelstra and H.~Schwichtenberg.
\newblock {\em Basic proof theory}.
\newblock Cambridge University Press, New York, NY, USA, 1996.

\end{thebibliography}
\end{document}